\documentclass[12pt]{article}
\usepackage{}
\textwidth  165mm \textheight  240mm \topmargin  -15mm
\oddsidemargin  -2mm

\usepackage{pifont}
\usepackage{enumerate}
\usepackage{amsfonts}
\usepackage{latexsym}
\usepackage{amsmath}
\usepackage{amssymb}
\usepackage{color}
\usepackage{makeidx}
\usepackage[pdftex,bookmarksnumbered,bookmarksopen,colorlinks,citecolor=blue,linkcolor=blue]{hyperref}
\usepackage{CJK}

\newcommand{\CA}{\mathcal{A}}
\newcommand{\CB}{\mathcal{B}}

\newcommand{\CF}{\mathcal{F}}

\newcommand{\CM}{\mathcal{M}}

\newcommand{\CP}{\mathcal{P}}
\newcommand{\CQ}{\mathcal{Q}}

\newcommand{\CT}{\mathcal{T}}
\newcommand{\CU}{\mathcal{U}}

\newcommand{\CZ}{\mathcal{Z}}

\newcommand{\mcal}{\mathcal}

\newcommand{\Aa}{\bigcap}
\newcommand{\Uu}{\bigcup}

\newcommand{\D}{\mathrm{d}}

\newcommand{\bde}{\begin{definition}}
\newcommand{\ede}{\end{definition}}
\newcommand{\beq}{\begin{eqnarray*}}
\newcommand{\eeq}{\end{eqnarray*}}
\newcommand{\bq}{\begin{eqnarray}}
\newcommand{\eq}{\end{eqnarray}}
\newcommand{\bbeq}{\begin{equation*}}
\newcommand{\eeeq}{\end{equation*}}
\newcommand{\bqq}{\begin{equation}}
\newcommand{\eqq}{\end{equation}}

\newtheorem{theorem}{Theorem}[section]

\newtheorem{proposition}{Proposition}[section]

\newenvironment{proof}[1][Proof]{\noindent \textbf{#1.} }{\ \ \  $\Box$}

\newtheorem{lemma}{Lemma}[section]
\newtheorem{definition}{Definition}[section]
\newtheorem{remark}{Remark}[section]

\title{{\LARGE\bf
        New Proofs for Several Properties of Capacities}}
\author{ \large Guangyan Jia\\
       Qilu Securities Institute for Financial Studies, Shandong University\\
       250199 Jinan, People's Republic of China\\
       E-mail: jiagy@sdu.edu.cn \\
       Na Zhang\footnote{corresponding author} \\
       College of Management and Economics, Tianjin University\\
China Center for Social Computing, Tianjin University\\
300072 Tianjin, People's Republic of China\\
       E-mail: znna1225@163.com}

\date{}

\begin{document}
\begin{CJK}{GBK}{song}

\maketitle
\par\noindent
-----------------------------------------------------------------------------------------------------------------------
\par\noindent
\vskip2mm{\bf Abstract}
In this note, we find a new way to prove several properties of 2-alternating capacities.

\vskip2mm\par {\bf Keywords:} Alternating capacity $\cdot$
       Probability measure $\cdot$ Monotone capacity $\cdot$ Minimal member
     \vskip2mm\par{\bf AMS 2010 subject classifications:\quad primary 28A12}

\par\noindent
-----------------------------------------------------------------------------------------------------------------------
\par\noindent
\section{Introduction}
Let $\Omega$ denote the basic set and $\CB$ the $\sigma$-algebra on $\Omega$.
A set function $c: \CB\rightarrow [0,1]$ is called a capacity if it satisfies:\\
(C1). $c(\Omega)=1, c(\emptyset)=0$;\\
(C2)(monotonicity). $c(A)\leq c(B)$ for any $A\subseteq B$, $A,B\in\CB$.\\
A capacity $c$ is called 2-alternating, if $c(A\cup B)+c(A\cap B)\leq c(A)+c(B)$.
It is called a probability measure if $c(A\cup B)+c(A\cap B)=c(A)+c(B)$. We usually denote a probability measure by $P$.

For any expectation $E$, we can define a capacity $c$ by $c(A)=E[I_A]$, $\forall A\in\CB$; on the other hand, for any capacity $c$, we can define expectation through Choquet integral, i.e., $E[X]=\int X\D c$. Choquet integral was first introduced by Choquet in 1953. The readers can refer to \cite{choquet1953} or \cite{denneberg1994} for more details. In \cite{denneberg1994}, Denneberg proved the following result.
\begin{lemma}[{\cite[Chapter 6]{denneberg1994}}]\label{denneberg1994chapter6}
If the integral with respect to a capacity $c$ is subadditive, i.e.,
\begin{eqnarray*}
\int (X+Y)\D c\leq \int X\D c+\int Y\D c,
\end{eqnarray*}
then $c$ is 2-alternating.
Conversely, let $c$ be a 2-alternating capacity, then for any $\CB$-measurable square integrable functions $X,Y$,
\begin{eqnarray*}
\int (X+Y)\D c \leq \int X\D c+\int Y\D c.
\end{eqnarray*}
\end{lemma}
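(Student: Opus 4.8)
\textbf{Subadditivity $\Rightarrow$ 2-alternating.} Apply the hypothesis with $X=I_A$ and $Y=I_B$, $A,B\in\CB$. As a function, $I_A+I_B$ equals $2$ on $A\A B$, equals $1$ on $(A\U B)\setminus(A\A B)$, and vanishes elsewhere, so $\{I_A+I_B\geq t\}$ is $A\U B$ for $t\in(0,1]$ and $A\A B$ for $t\in(1,2]$. Hence the layer-cake formula gives $\int(I_A+I_B)\,\D c=c(A\U B)+c(A\A B)$, while $\int I_A\,\D c=c(A)$ and $\int I_B\,\D c=c(B)$, and the assumed inequality reads precisely $c(A\U B)+c(A\A B)\leq c(A)+c(B)$.

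\textbf{2-alternating $\Rightarrow$ subadditivity.} I would begin with three routine reductions. Square integrability makes the relevant Choquet integrals finite, and truncating $X$ and $Y$ at the levels $\pm n$ yields Choquet integrals that converge to $\int X\,\D c$, $\int Y\,\D c$ as $n\to\infty$ (convergence of an ordinary Lebesgue integral in the layer-cake variable), so it suffices to treat bounded $X,Y$. Translation-equivariance, $\int(X+a)\,\D c=\int X\,\D c+a$, then reduces to $X,Y\geq 0$. Finally, rounding a bounded nonnegative $X$ down to the grid $\{jM/2^{k}\}$ gives simple $X_k$ with $X-M2^{-k}\leq X_k\leq X$, hence $\int X_k\,\D c\to\int X\,\D c$ (using only monotonicity and translation-equivariance of the Choquet integral, no continuity of $c$), and likewise $\int(X_k+Y_k)\,\D c\to\int(X+Y)\,\D c$; so it is enough to prove the inequality for nonnegative \emph{simple} $X$ and $Y$.

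So fix nonnegative simple $X,Y$ and let $\CA_0$ be the finite algebra generated by all their level sets (which also contains the level sets of $X+Y$). The key claim is: \emph{there is a finitely additive probability $P$ on $\CA_0$ with $P(D)\leq c(D)$ for all $D\in\CA_0$ and $P(\{X+Y\geq t\})=c(\{X+Y\geq t\})$ for all $t$.} Granting it, write $s_1<\cdots<s_r$ for the values of $X+Y$ and $s_0:=0$; then $\int(X+Y)\,\D c=\sum_{j=1}^r(s_j-s_{j-1})\,c(\{X+Y\geq s_j\})=\sum_{j=1}^r(s_j-s_{j-1})\,P(\{X+Y\geq s_j\})=E_P[X+Y]$, while the same layer-cake computation together with $P\leq c$ on $\CA_0$ gives $E_P[X]\leq\int X\,\D c$ and $E_P[Y]\leq\int Y\,\D c$; since $P$ is finitely additive, hence linear on $\CA_0$-measurable simple functions,
\begin{eqnarray*}
\int(X+Y)\,\D c=E_P[X+Y]=E_P[X]+E_P[Y]\leq\int X\,\D c+\int Y\,\D c .
\end{eqnarray*}
To construct $P$, embed the nested sets $\{X+Y\geq s_j\}$ in a maximal chain $\emptyset=F_0\subset F_1\subset\cdots\subset F_N=\Omega$ of members of $\CA_0$ and put the (by monotonicity nonnegative) mass $c(F_i)-c(F_{i-1})$ on each atom $F_i\setminus F_{i-1}$; telescoping gives $P(F_i)=c(F_i)$ for every $i$, so $P$ matches $c$ along the whole chain, in particular on the $\{X+Y\geq s_j\}$.

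The step I expect to be the real obstacle -- and the only point at which the 2-alternating hypothesis is genuinely used -- is verifying that this greedily built $P$ satisfies $P\leq c$ on \emph{all} of $\CA_0$, not just along the chain $F_0\subset\cdots\subset F_N$. This is Shapley's observation that every marginal (greedy) vector of a convex game lies in its core, here applied to the supermodular conjugate $\bar c(D):=1-c(D^{c})$, whose core is exactly $\{P:P\leq c\}$: concretely one runs an induction along the chain in which each required comparison $P(D)\leq c(D)$ is closed using the defining inequality $c(E\U F)+c(E\A F)\leq c(E)+c(F)$, the measure $P$ being the appropriately ``minimal'' member of the dominated family that realizes $c$ on the prescribed nested sets. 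Once this finite fact is secured, the reductions above upgrade it to the stated square-integrable conclusion; assembling the pieces is routine.
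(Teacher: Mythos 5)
Your argument is correct in outline, but it is essentially Denneberg's original route rather than the route this paper takes. Your key claim -- a finitely additive probability $P$ with $P\leq c$ on the whole finite algebra and $P=c$ along a prescribed chain, built as the marginal (greedy) vector $P(F_i\setminus F_{i-1})=c(F_i)-c(F_{i-1})$ and verified to be dominated by $c$ via the Shapley core induction for submodular set functions -- is exactly Lemma \ref{denneberg1994lemma6}, i.e.\ the statement of Theorem \ref{exist}; once you have it, the layer-cake computation $\int(X+Y)\,\D c=E_P[X+Y]=E_P[X]+E_P[Y]\leq\int X\,\D c+\int Y\,\D c$ and the indicator argument for the converse direction are the standard ones. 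The paper, however, deliberately avoids the comonotone ordering and the greedy construction: it proves the same existence fact purely by capacity manipulations, namely the transformation $\Pi^F c=c^F$ with $c^F(B)=c(F\U B)+c(F\A B)-c(F)$, which stays in $\CA_2$, satisfies $c^F\leq c$, fixes $c$ on the chain, and strictly enlarges the invariant subfield $\CB^c$ (Proposition \ref{cap:prop:mapping}); iterating until $\CB^{\mu}=\CB$ yields the dominated probability (Theorem \ref{exist}), and the same device gives the minimality characterization of probability measures in $\CA_2$ (Theorem \ref{main}) and the sandwich result (Theorem \ref{cap:thm:sandwich}), which your construction does not produce. So yours buys the classical one-shot construction; the paper's buys an ordering-free, purely capacity-theoretic proof with extra structural by-products. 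Two small repairs to your sketch: the core-domination step should be written out (for an atom $F_i\setminus F_{i-1}\subset D$, 2-alternation gives $c(F_i)+c(D\A F_{i-1})\leq c(F_{i-1})+c(D\A F_i)$, and summing telescopes to $P(D)\leq c(D)$); and the two-sided truncation reduction is not quite right as stated, since $X_n+Y_n$ is not a truncation of $X+Y$ and the Choquet integral has no Fatou/monotone convergence theorem without continuity of $c$ -- it is cleaner to prove the inequality first for nonnegative functions (where $(X+Y)\wedge M\leq X\wedge M+Y\wedge M$ suffices), then for functions bounded below by translation, and finally pass to general square integrable $X,Y$ using monotonicity, $X+Y\leq (X\vee(-m))+(Y\vee(-m))$, and convergence of $\int (X\vee(-m))\,\D c$ in the layer-cake variable.
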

In order to prove the above result, Denneberg proved the following result.
\begin{lemma}[{\cite[Lemma 6]{denneberg1994}}]\label{denneberg1994lemma6}
Suppose that $A_1,A_2,\ldots,A_n$ is a partition of $\Omega$, $\CB$ is a $\sigma$-algebra generated by $A_1,A_2,\ldots,A_n$ and $c:\CB\rightarrow[0,1]$ is a capacity. For any permutation $\pi$ of $(1,\ldots,n)$, we define
$$
S^\pi_i:=\bigcup\limits^i_{j=1}A_{\pi_j},\quad i=1,\ldots,n,\quad S^\pi_0:=\emptyset.
$$
We define a probability measure $P^\pi$ on $\CB$ by
$$
P^\pi(A_{\pi_i}):=\mu(S^\pi_i)-\mu(S^\pi_{i-1}),\quad i=1,\ldots,n.
$$
Suppose $X$ is a $\CB$-measurable real valued function $X$ defined on $\Omega$. If $\mu$ Is 2-alternating,
then
$$
\int X\D \mu\geq\int X\D P^\pi.
$$
If $X(A_{\pi_1})\geq X(A_{\pi_2})\geq\ldots\geq X(A_{\pi_n})$, the above equality holds.
\end{lemma}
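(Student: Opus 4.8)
The plan is to reduce the statement to two ingredients: a closed form for the Choquet integral of $X$ against the finitely supported measure $P^\pi$, which makes the equality assertion immediate, and a one-step comparison of $P^\pi$ and $P^{\pi'}$ for permutations $\pi,\pi'$ differing by an adjacent transposition, from which the inequality follows by a bubble-sort induction. Since $X$ is $\CB$-measurable it is constant on each atom; write $x_i$ for its value on $A_i$. Monotonicity of $\mu$ makes the numbers $P^\pi(A_{\pi_i})=\mu(S^\pi_i)-\mu(S^\pi_{i-1})$ nonnegative, and they telescope to $\mu(\Omega)=1$, so $P^\pi$ is a genuine probability measure; as $\CB$ is generated by finitely many atoms, the Choquet integral against $P^\pi$ is the usual expectation, and an Abel summation gives
\[
\int X\,\D P^\pi \;=\; \sum_{i=1}^n x_{\pi_i}\bigl(\mu(S^\pi_i)-\mu(S^\pi_{i-1})\bigr)\;=\; x_{\pi_n}+\sum_{i=1}^{n-1}(x_{\pi_i}-x_{\pi_{i+1}})\,\mu(S^\pi_i).
\]

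To get the equality statement, assume $x_{\pi_1}\ge\cdots\ge x_{\pi_n}$. Subtracting a constant shifts both $\int X\,\D\mu$ and $\int X\,\D P^\pi$ by the same amount, so we may assume $x_{\pi_n}\ge0$; then $\{X>t\}=S^\pi_i$ for $t\in[x_{\pi_{i+1}},x_{\pi_i})$ (with $\{X>t\}=\Omega$ when $t<x_{\pi_n}$ and $\emptyset$ when $t\ge x_{\pi_1}$), and integrating $t\mapsto\mu(\{X>t\})$ over $[0,\infty)$ produces exactly $x_{\pi_n}+\sum_{i=1}^{n-1}(x_{\pi_i}-x_{\pi_{i+1}})\mu(S^\pi_i)$, which by the display above equals $\int X\,\D P^\pi$. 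Only monotonicity of $\mu$ is used here.

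For the inequality, fix a permutation $\sigma$ with $x_{\sigma_1}\ge\cdots\ge x_{\sigma_n}$; by the previous paragraph $\int X\,\D\mu=\int X\,\D P^\sigma$, so it is enough to show $\int X\,\D P^\sigma\ge\int X\,\D P^\pi$ for every $\pi$. I would induct on the number of pairs $i<j$ with $\pi_i$ appearing after $\pi_j$ in the list $\sigma_1,\dots,\sigma_n$. If this number is zero then $\pi=\sigma$; otherwise some adjacent pair $\pi_k,\pi_{k+1}$ is such a pair, so $x_{\pi_k}\le x_{\pi_{k+1}}$. Let $\pi'$ be $\pi$ with entries $k$ and $k+1$ swapped and put $a=\pi_k$, $b=\pi_{k+1}$, $T=S^\pi_{k-1}$. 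Since $S^\pi_i=S^{\pi'}_i$ for all $i\ne k$ while $S^\pi_k=T\cup A_a$ and $S^{\pi'}_k=T\cup A_b$, only the $i=k,k+1$ terms of $\sum_i x_{\pi_i}(\mu(S^\pi_i)-\mu(S^\pi_{i-1}))$ change, and collecting the coefficients of $x_a$ and $x_b$ one finds
\[
\int X\,\D P^{\pi'}-\int X\,\D P^{\pi}\;=\;(x_b-x_a)\Bigl(\mu(T\cup A_a)+\mu(T\cup A_b)-\mu(T)-\mu(T\cup A_a\cup A_b)\Bigr).
\]
The first factor is $\ge0$ by the choice of $k$; the second is $\ge0$ because $\mu$ is $2$-alternating, applying $c(E\cup F)+c(E\cap F)\le c(E)+c(F)$ with $E=T\cup A_a$, $F=T\cup A_b$ (so $E\cup F=T\cup A_a\cup A_b$, $E\cap F=T$). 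Hence $\int X\,\D P^{\pi'}\ge\int X\,\D P^{\pi}$, and since $\pi'$ has one fewer such pair, the induction yields $\int X\,\D\mu=\int X\,\D P^\sigma\ge\int X\,\D P^{\pi'}\ge\int X\,\D P^{\pi}$.

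The hypothesis of $2$-alternation enters only through the last display, so the substance of the argument is the bookkeeping in the inductive step: verifying that an adjacent transposition changes exactly one of the sets $S^\pi_i$ and that the resulting difference of integrals telescopes to the displayed product. I expect that sign-tracking to be the only point needing care; the rest is routine.
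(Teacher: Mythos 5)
Your proof is correct, and it takes a genuinely different route from the paper. The paper does not argue directly with the Choquet integral at all: it reproves the lemma through its Theorem~\ref{exist}, i.e.\ via the transformation $c\mapsto c^F=\Pi^F c$ and the invariant subfield $\CB^c$ of Proposition~\ref{cap:prop:mapping}. Applying $\Pi^{S^\pi_i}$ repeatedly to $\mu$ along the chain $S^\pi_1\subset\cdots\subset S^\pi_n$, the paper constructs step by step a probability measure $P\leq\mu$ with $P(S^\pi_i)=\mu(S^\pi_i)$ for all $i$; such a $P$ is forced to be $P^\pi$ on the atoms, and the setwise domination $P^\pi\leq\mu$ then yields the integral inequality, with equality in the comonotone case coming from the usual chain formula for the Choquet integral. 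You instead bypass all of that machinery: your Abel summation identity plus the adjacent-transposition (bubble-sort) induction proves the integral inequality directly, with $2$-alternation entering only in the single swap inequality $\mu(T\cup A_a)+\mu(T\cup A_b)\geq\mu(T)+\mu(T\cup A_a\cup A_b)$, and your computation of the equality case uses only monotonicity (note you implicitly use translation equivariance of the asymmetric Choquet integral to reduce to $X\geq 0$, which is legitimate and worth stating). The two arguments are equivalent in content --- specializing your inequality to indicators recovers $P^\pi\leq\mu$ --- but they buy different things: your proof is elementary, self-contained, and close in spirit to Denneberg's original combinatorial argument, whereas the paper's detour through $\Pi^F$ and $\CB^c$ is precisely what it wants to showcase, since the same machinery also gives the constructive procedure of Theorem~\ref{exist}, the sandwich result of Theorem~\ref{cap:thm:sandwich}, and the minimality characterization of Theorem~\ref{main}.
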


Since Choquet integral is positive homogeneous, any Choquet expectation generated by a 2-alternating capacity is sublinear expectation. Jia \cite{jia2009minimal} defined a partial order "$\leq$" on the set of expectations as follows:
\begin{center}
for any two expectations $E_1$ and $E_2$, $E_1\leq E_2$ if for any $\CB$-measurable square integrable random variable $X$, $E_1[X]\leq E_2[X]$,
\end{center}
and proved the following results.
\begin{lemma}[{\cite[Theorem 2.7 and Theorem 3.1]{jia2009minimal}}]
$E$ is a minimal member of the set of all the sublinear expectations if and only if $E$ is a linear expectation. Suppose $E_1$ is a subadditive expectation, $E_2$ is a superadditive expectation and $E_1\geq E_2$, then there exists a linear expectation $E_0$ such that $E_1\geq E_0\geq E_2$.
\end{lemma}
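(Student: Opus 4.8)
My plan is to derive both parts of the lemma from the Hahn--Banach sandwich theorem, after recording the elementary consistency relations that normalization and monotonicity impose on an expectation. First I would dispose of the easy half of the characterization, which is direct: if $E$ is linear and $E'$ is a sublinear expectation with $E' \leq E$, then for every $X$ we have $E'(X) \leq E(X)$ and $E'(-X) \leq E(-X) = -E(X)$, while subadditivity of $E'$ gives $0 = E'(0) \leq E'(X) + E'(-X)$, so $-E'(-X) \leq E'(X)$; chaining these inequalities yields $E'(X) \leq E(X) \leq -E'(-X) \leq E'(X)$, hence $E' = E$, so every linear expectation is a minimal element of the set of sublinear expectations.

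Next I would prove the sandwich statement. Regard $E_1$ as a sublinear functional (positively homogeneous and subadditive) and $E_2$ as a superlinear functional (positively homogeneous and superadditive) on the relevant space of random variables; the hypothesis $E_1 \geq E_2$ is precisely pointwise domination. Put $r(X) := \inf_Y \{ E_1(X+Y) - E_2(Y) \}$. From $E_1(X+Y) \geq E_2(X+Y) \geq E_2(X) + E_2(Y)$ one gets $r(X) \geq E_2(X)$, so $r$ is finite; taking $Y = 0$ gives $r(X) \leq E_1(X)$; and a short computation shows $r$ is positively homogeneous and subadditive. Applying Hahn--Banach to extend the zero functional on $\{0\}$ (dominated by $r$) produces a linear functional $E_0$ with $E_0 \leq r \leq E_1$, and evaluating at $-X$ with the choice $Y = X$ in the infimum gives $E_0(-X) \leq r(-X) \leq E_1(0) - E_2(X) = -E_2(X)$, i.e.\ $E_0(X) \geq E_2(X)$. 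Since $E_2 \leq E_0 \leq E_1$ and $E_1(1) = E_2(1) = 1$ force $E_0(1) = 1$, while $E_0 \geq E_2 \geq 0$ on nonnegative variables makes $E_0$ positive and hence monotone, $E_0$ is a genuine linear expectation sandwiched between $E_2$ and $E_1$.

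The hard half of the characterization then follows cheaply: given a minimal sublinear expectation $E$, set $E_2(X) := -E(-X)$, which is a positively homogeneous, superadditive, normalized expectation with $E_2 \leq E$ (superadditivity and the inequality both coming from subadditivity of $E$), and apply the sandwich part with $E_1 = E$ to get a linear $E_0$ with $E_2 \leq E_0 \leq E$; minimality forces $E_0 = E$, so $E$ is linear. The place I expect genuine friction is not the algebra but the functional-analytic bookkeeping: Hahn--Banach returns a positive normalized \emph{linear functional} on whatever vector space of random variables one fixes, and one must check it is a ``linear expectation'' in the precise sense of \cite{jia2009minimal} --- in particular that it is compatible with a probability measure and has the right continuity and domain behaviour on square-integrable variables. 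Pinning this down usually means choosing the space carefully (bounded, or $L^2$) and invoking continuity from below or a representation theorem, and the exact route depends on how ``expectation'' is axiomatized there.
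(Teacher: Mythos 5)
First, a contextual point: this lemma is quoted from \cite{jia2009minimal} and the present paper gives no proof of it at all; the note's own contribution is to prove the capacity analogues (Theorems \ref{main} and \ref{cap:thm:sandwich}) by the elementary transformation $c^F(B)=c(F\cup B)+c(F\cap B)-c(F)$ and a finite iteration, precisely so as to avoid the functional-analytic machinery you invoke. So your Hahn--Banach sandwich argument is not ``the paper's proof''; it is the classical route, close in spirit to the cited source. Within that route, your easy half is correct (the chain $E'(X)\leq E(X)\leq -E'(-X)\leq E'(X)$ is exactly the expectation version of the complementation trick the paper uses for capacities), and your reduction of ``minimal sublinear $\Rightarrow$ linear'' to the sandwich via $E_2(X):=-E(-X)$ is also correct, since there both functionals are positively homogeneous.

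The genuine gap is in the sandwich step itself: the hypothesis is only that $E_1$ is subadditive and $E_2$ superadditive, and you silently strengthen this by ``regarding'' them as positively homogeneous. Without homogeneity your $r(X)=\inf_Y\{E_1(X+Y)-E_2(Y)\}$ is still finite, subadditive and sandwiched between $E_2$ and $E_1$, but it need not be positively homogeneous, and the classical Hahn--Banach dominated-extension theorem requires a sublinear (or at least convex) majorant --- subadditivity alone does not qualify, so the extension step as written does not apply. The standard repair is either to pass first to the homogenizations $\bar E_1(X)=\inf_{t>0}E_1(tX)/t$ and $\underline E_2(X)=\sup_{t>0}E_2(tX)/t$ (by a Fekete-type argument these are sublinear, resp.\ superlinear, and satisfy $E_2\leq\underline E_2\leq\bar E_1\leq E_1$, so your argument then runs verbatim and yields $E_2\leq E_0\leq E_1$), or to invoke a subadditive--superadditive sandwich theorem (K\"onig, Kaufman--Kranz) producing an additive $E_0$ and then upgrade additivity to linearity using monotonicity and constant preservation. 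With that repair, together with the domain/monotonicity bookkeeping you yourself flag at the end, the proof goes through; but note, for contrast, that in the finite-$\mathcal{B}$ setting of this paper the same sandwich is obtained constructively by iterating $\Pi^A$, with no appeal to Hahn--Banach at all.
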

Therefore we may wonder if the minimal members of all the 2-alternating capacities are exactly the probability measures? In fact, the answer is positive, since the following result holds:
\begin{center}
Suppose $\CQ$ is a set, $\prec$ is a semiorder defined on $\CQ$ and $\CU$ denotes the set of all the minimal members in $\CQ$. Thus for any set $\CZ$ satisfying $\CU\subset\CZ\subset\CQ$, the set of all the minimal members of $\CZ$ is still $\CU$.
\end{center}

In this note, we'll give another proof of the above results by means of capacity only.
The method of constructing a probability measure step by step from a 2-alternating capacity is also given.

\section{Main results}
First we list the following definitions which will be used below.
A capacity defined on $(\Omega,\CB)$ is said to be:
\begin{itemize}
\item 2-$monotone$ if $c(A\cup B)+c(A\cap B)\geq c(A)+c(B)$;
\item $n$-$alternating$ if $c(\Aa\limits^n_{i=1}A_i)\leq \sum\limits_{\emptyset\neq I\subset\{1,...,n\}}(-1)^{|I|+1}v(\Uu\limits_{i\in I}A_i), \forall A_1,...,A_n\in \CB$;
\item $n$-$monotone$ if $c(\Uu\limits^n_{i=1}A_n)\geq \sum\limits_{\emptyset\neq I\subset\{1,...,n\}}(-1)^{|I|+1}v(\Aa\limits_{i\in I}A_i), \forall A_1,...,A_n\in \CB$;
\item $\infty$-$alternating$ if $c$ is n-alternating, for all $n$;
\item $\infty$-$monotone$ if $c$ is n-monotone, for all $n$.
\end{itemize}
Furthermore, we have the following notations.
\begin{itemize}
\item $\CA_n$ denotes the set of $n$-alternating capacities, for any $n\geq 2$;
\item $\CM_n$ denotes the set of $n$-monotone capacities, for any $n\geq 2$;
\item $\CP$ denotes the set of probability measures;
\item $\CA_{\infty}$ denotes the set of $\infty$-alternating capacities;
\item $\CM_{\infty}$ the set of $\infty$-monotone capacities.
\end{itemize}
It is known that $\CP\subseteq\CA_{\infty}\subseteq\CA_{n+1}\subseteq \CA_n$, $\CP\subseteq\CM_{\infty}\subseteq\CM_{n+1}\subseteq \CM_n$ and $\CA_n\cap\CM_m=\CP$ for any $n\geq 2$ and $m\geq 2$.

Now let us define the partial order "$\leq$": for any two capacities $c_1$ and $c_2$, $c_1\leq c_2$ means that $c_1(A)\leq c_2(A)$, for all $A\in \CB$. If $c_1\leq c_2$, we can also denote by $c_2\geq c_1$. If $c_1\leq c_2$ and $c_1\geq c_2$, we have $c_1=c_2$.

The following lemma holds.
\begin{lemma}
Let $\mcal{T}\subset \CA_2$ be a nonempty set and totally ordered (for each pair $c_1,c_2\in \mcal{T}$, one has either $c_1\leq c_2$ or $c_2\leq c_1$). Then the set function
$$
\nu(A)\triangleq\inf\limits_{c\in\mcal{T}}c(A),\qquad A\in \CB,
$$
is a 2-alternating capacity, that is $\nu\in \CA_2$.
\end{lemma}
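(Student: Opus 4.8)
The plan is to verify the three defining properties of a 2-alternating capacity in turn, the first two being immediate and the last being the only point that actually uses the hypotheses. For $(C1)$, every $c\in\mcal{T}$ satisfies $c(\Omega)=1$ and $c(\emptyset)=0$, so taking infima over the (nonempty) family $\mcal{T}$ gives $\nu(\Omega)=1$ and $\nu(\emptyset)=0$; nonemptiness of $\mcal{T}$ is also what keeps $\nu(A)\in[0,1]$ for every $A$. For monotonicity $(C2)$, if $A\subseteq B$ then $c(A)\leq c(B)$ for each $c\in\mcal{T}$, hence $\nu(A)=\inf_{c}c(A)\leq\inf_{c}c(B)=\nu(B)$. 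So $\nu$ is a capacity, and it remains to establish the 2-alternating inequality.

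For that, fix $A,B\in\CB$ and $\veps>0$. By definition of the infimum there exist $c_1,c_2\in\mcal{T}$ with $c_1(A)\leq\nu(A)+\veps$ and $c_2(B)\leq\nu(B)+\veps$. Here is where the total ordering enters: since $\mcal{T}$ is totally ordered, either $c_1\leq c_2$ or $c_2\leq c_1$; let $c$ be the smaller of the two. Then $c\leq c_1$ and $c\leq c_2$ pointwise, so $c(A)\leq\nu(A)+\veps$ and $c(B)\leq\nu(B)+\veps$ \emph{simultaneously}, while $c\in\mcal{T}\subset\CA_2$ is itself 2-alternating. Consequently
\[
\nu(A\cup B)+\nu(A\cap B)\leq c(A\cup B)+c(A\cap B)\leq c(A)+c(B)\leq\nu(A)+\nu(B)+2\veps,
\]
where the first inequality is $\nu\leq c$ pointwise, the second is the 2-alternating property of $c$, and the third is the choice of $c$. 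Letting $\veps\downarrow 0$ yields $\nu(A\cup B)+\nu(A\cap B)\leq\nu(A)+\nu(B)$, i.e. $\nu\in\CA_2$.

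I do not expect a genuine obstacle; the single subtlety worth flagging is that the argument requires one member of $\mcal{T}$ that is nearly optimal for $A$ \emph{and} for $B$ at the same time, and that is precisely what total ordering supplies — an infimum over an arbitrary (non-directed) family of 2-alternating capacities need not be 2-alternating. The same proof in fact works verbatim if ``totally ordered'' is weakened to ``downward directed'', and it is this lemma that will later let us run a Zorn-type argument on chains inside $\CA_2$.
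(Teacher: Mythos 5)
Your proof is correct and rests on the same key idea as the paper's: the total ordering of $\mathcal{T}$ yields a single capacity that is nearly optimal for $A$ and $B$ simultaneously, which the paper phrases as the identity $\inf_{c\in\mathcal{T}}\{c(A)+c(B)\}=\nu(A)+\nu(B)$ and you phrase through an $\varepsilon$-argument. This is only a presentational difference, so the two proofs are essentially the same.
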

\begin{proof}
It is obvious that $\nu(\Omega)=1,\nu(\emptyset)=0$ and $\nu$ is monotone. We now prove that it is 2-alternating.
\begin{eqnarray*}
\nu(A\cap B)=\inf\limits_{c\in\CT}c(A\cap B)\\
\leq\inf\limits_{c\in\CT}\{c(A)+c(B)-c(A\cup B)\}\\
\leq \inf\limits_{c\in\CT}\{c(A)+c(B)\}-\nu(A\cup B).
\end{eqnarray*}
Since $\CT$ is totally ordered, for $c_1,c_2\in \CT$, we suppose, without lost of generality, that $c_1\leq c_2$, so $c_1(A)+c_2(B)\geq c_1(A)+c_1(B)$.
Therefore,
\begin{eqnarray*}
\nu(A\cap B)\leq \inf\limits_{c_1,c_2\in\CT}\{c_1(A)+c_2(B)\}-\nu(A\cup B)\\
=\inf\limits_{c\in\CT}\{c(A)\}+\inf\limits_{c\in\CT}\{c(B)\}-\nu(A\cup B)\\
=\nu(A)+\nu(B)-\nu(A\cup B).
\end{eqnarray*}
Thus the result follows.
\end{proof}

\begin{theorem}\label{main}
Any $P\in\CP$ is a minimal member of $\CA_2$. Conversely, if $c$ is a minimal member of $\CA_2$, then $c\in \CP$.
\end{theorem}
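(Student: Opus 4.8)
The plan is to prove the two directions separately, using Lemma \ref{denneberg1994lemma6} (Denneberg's Lemma 6) as the key technical tool for the second direction.

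For the first direction — that every $P\in\CP$ is minimal in $\CA_2$ — I would argue by contradiction. Suppose $c\in\CA_2$ with $c\leq P$ and $c\neq P$, so there is some $A_0\in\CB$ with $c(A_0)<P(A_0)$. Since $P$ is additive, $P(A_0)+P(A_0^{\mathrm c})=1=c(\Omega)$, and monotonicity together with $c\leq P$ forces $c(A_0^{\mathrm c})\leq P(A_0^{\mathrm c})$. Now apply 2-alternation of $c$ to the pair $A_0,A_0^{\mathrm c}$: $1=c(\Omega)=c(A_0\cup A_0^{\mathrm c})\leq c(A_0)+c(A_0^{\mathrm c})-c(\emptyset)=c(A_0)+c(A_0^{\mathrm c})<P(A_0)+P(A_0^{\mathrm c})=1$, a contradiction. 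Hence no such $c$ exists and $P$ is minimal. (This step is short and self-contained; the only subtlety is remembering to use the complement, which is exactly where 2-alternation bites.)

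For the converse — that a minimal member $c$ of $\CA_2$ is a probability measure — the strategy is: given any $A\in\CB$, produce a probability measure $P\in\CP$ with $P\leq c$, and then the minimality of $c$ forces $c=P$, so in particular $c(A)=P(A)$; since $P$ is additive this will pin down $c$ as additive. Concretely, fix $A\in\CB$ and consider the finite sub-$\sigma$-algebra generated by the partition $\{A,A^{\mathrm c}\}$ (or, if one wants to handle an arbitrary finite Boolean combination at once, the partition generated by finitely many sets). Order the partition as $A_{\pi_1}=A$, $A_{\pi_2}=A^{\mathrm c}$ and invoke Lemma \ref{denneberg1994lemma6}: the set function $P^\pi$ defined there by $P^\pi(S^\pi_i)-P^\pi(S^\pi_{i-1})=c(S^\pi_i)-c(S^\pi_{i-1})$ is a probability measure on that finite algebra satisfying $\int X\,\D P^\pi\leq\int X\,\D c$ for every measurable $X$; taking $X=I_B$ for $B$ in the finite algebra gives $P^\pi(B)\leq c(B)$. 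One then has to extend $P^\pi$ to a probability measure on all of $\CB$ that still lies below $c$ — this is the step that needs care. The cleanest route is: by Lemma \ref{denneberg1994lemma6} we get, for each finite sub-$\sigma$-algebra $\CB_0$, a probability $P_{\CB_0}\leq c|_{\CB_0}$; the family of such $P$'s (viewed as extended to $\CB$ arbitrarily, or via the Lemma (stated earlier) on totally ordered families and the Lemma just proved above) is directed, and a compactness/Zorn argument on $\{c'\in\CA_2:c'\leq c\}$ — whose chains have lower bounds in $\CA_2$ by the Lemma proved just above this theorem — yields a minimal element $c_*\leq c$; by the first half of the theorem we only need $c_*\in\CP$, and $c_*$ agrees with some $P^\pi$ on every finite algebra, forcing additivity. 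Then minimality of $c$ gives $c=c_*\in\CP$.

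The main obstacle is the passage from the finite-algebra construction to a genuine probability measure on all of $\CB$ dominated by $c$: Denneberg's Lemma 6 is purely finite-dimensional, so one must either (i) run Zorn's lemma on the set $\{c'\in\CA_2 : c'\leq c\}$ — which is legitimate precisely because the Lemma immediately preceding Theorem \ref{main} shows every totally ordered subfamily has a lower bound in $\CA_2$, so a minimal element $c_*$ exists — and then show $c_*$ is additive on each finite algebra (hence additive, hence in $\CP$) by comparing with the $P^\pi$ from Lemma \ref{denneberg1994lemma6}; or (ii) set up a projective-limit / inner-regularity argument to patch the $P^\pi$ together. I expect route (i) to be the intended one, since the preceding lemma about totally ordered families of $2$-alternating capacities is exactly the hypothesis Zorn's lemma needs, and it is otherwise unused. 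The remaining work — checking that a $2$-alternating capacity which restricts to a probability measure on every finitely generated sub-$\sigma$-algebra is itself a probability measure, and that Lemma \ref{denneberg1994lemma6} indeed forces this on a minimal $c_*$ — is routine once the framework is in place.
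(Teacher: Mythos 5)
Your first direction is correct and is essentially the paper's own argument: from $c\leq P$ and 2-alternation applied to the pair $A$, $A^c$ you get $1\leq c(A)+c(A^c)\leq P(A)+P(A^c)=1$, forcing $c=P$, so $P$ is minimal.

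The converse, however, has a genuine gap. Your plan is to produce a probability measure on all of $\CB$ dominated by $c$ and then invoke minimality, but the only probability measures you actually construct are the $P^{\pi}$ of Lemma \ref{denneberg1994lemma6}, which live on a finite subalgebra; the extension to a probability measure on $\CB$ lying below $c$ --- which you yourself flag as ``the step that needs care'' --- is never carried out. The ``cleanest route'' you propose does not close it: applying Zorn's lemma to $\{c'\in\CA_2:c'\leq c\}$ when $c$ is already assumed minimal is vacuous, because by the very definition of minimality that set is $\{c\}$, so the minimal element $c_*$ it produces is $c$ itself and you are back to the original question. The concluding claim that ``$c_*$ agrees with some $P^{\pi}$ on every finite algebra, forcing additivity'' is precisely what has to be proved, and no argument is offered: minimality yields information only once you exhibit, for each $A\in\CB$, a competitor in $\CA_2$ defined on all of $\CB$, dominated by $c$, and strictly smaller unless $c$ is additive at $A$. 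The paper supplies exactly such a competitor, namely $c^{A}(B):=c(A\cup B)+c(A\cap B)-c(A)$: one checks that $c^{A}$ is a capacity, that $c^{A}\leq c$ (this is 2-alternation), and that $c^{A}$ is again 2-alternating (a short computation using $A\cup(B\cup F)=(A\cup B)\cup(A\cup F)$ and its dual); minimality then gives $c^{A}=c$, i.e.\ $c(A\cup B)+c(A\cap B)=c(A)+c(B)$ for all $B$, and since $A$ is arbitrary, $c\in\CP$. That argument is elementary, uses neither Lemma \ref{denneberg1994lemma6} nor Zorn's lemma, and works for arbitrary (not necessarily finite) $\CB$; by contrast, repairing your strategy would require proving that every 2-alternating capacity on a general $\sigma$-algebra dominates a genuine probability measure (a core-nonemptiness result), which is substantially harder than the theorem itself and is only established in the paper, via Theorem \ref{exist}, when $\CB$ is finite.
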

\begin{proof}
Suppose $c\in\CA_2, c\leq P$. Then we have
$$
\forall A\in \CB, 1-c(A^c)\leq c(A)\leq P(A)=1-P(A^c).
$$
Since $c(A^c)\leq P(A^c)$, we have $c(A)=P(A)$, which means that there is no 2-alternating capacity $c$ satisfying $c\leq P$, i.e., $P$ is a minimal member of $\CA_2$.

If $c$ is a minimal member of $\CA_2$, for a fixed $A\in\CB$, we define
$$
c^A(B):=c(A\cup B)+c(A\cap B)-c(A), \forall B\in\CB.
$$
Obviously, $c^A\leq c$, $c^A(\Omega)=c(\Omega)+c(A)-c(A)=1$, $c^A(\emptyset)=c(A)+0-c(A)=0$. The monotonicity of $c^A$ can be easily deduced by the monotonicity of $c$.
For any $B\in \CB,F\in \CB$,
\begin{eqnarray*}
c^A(B\cup F)+c^A(B\cap F)
&=&c(A\cup(B\cup F))+c(A\cap(B\cup F))-c(A)\\
&&+c(A\cup(B\cap F))+c(A\cap(B\cap F))-c(A)\\
&=&c((A\cup B)\cup(A\cup F))+c((A\cap B)\cup(A\cap F))\\
&&+c((A\cup B)\cap(A\cup F))+c((A\cap B)\cap(A\cap F))-2c(A)\\
&\leq & c(A\cup B)+c(A\cup F)+c(A\cap B)+c(A\cap F)-2c(A)\\
&=& c^A(B)+c^A(F),
\end{eqnarray*}
i.e., $c^A\in \CA_2$. Note that $c$ is the minimal member of $\CA_2$, thus $c^A=c$, which means that, for any $B\in \CB$, we have
$c(A\cup B)+c(A\cap B)=c(A)+c(B)$. Since $A$ can be any set in $\CB$, $c$ is a probability measure.
\end{proof}

\begin{remark}
\begin{enumerate}
\item By similar proof we can deduce that any minimal member of $\CA_n$ ($n\geq 2$) (resp. $\CA_\infty$) can only be probability measure and any probability measure is its minimal member.
\item The maximal member of $\CM_n$ ($n\geq 2$) (resp. $\CM_\infty$) can only be probability measure and any probability measure is its maximal member.
\end{enumerate}
\end{remark}

\begin{definition}
For a capacity $c$, we define the invariant subfield $\CB^c$ of $c$ as follows:
$$\CB^c\triangleq\{A\in\CB: \forall B\in\CB, c(A\cup B)+c(A\cap B)=c(A)+c(B)\}$$
\end{definition}

It is obvious that $\CB^c$ is nonempty, since $\Omega\in\CB^c$ and $\emptyset\in\CB^c$. A capacity $c$ is a probability measure if and only if $\CB^c=\CB$.
Note that if $c$ is a 2-alternating capacity, then for all $A\in\CB$, such that $c(A)=0$, we have $A\in\CB^c$. If $c$ is a 2-monotone capacity, then for all $A\in\CB$, such that $c(A)=1$, we have $A\in \CB^c$.

$\forall c\in\CA_2$, it has been proved that $c^F\in \CA_2$. Thus we can define the following mapping.

\begin{definition}
For all $\CF\in\CB$, we define mapping $\Pi^F:\CA^2\rightarrow\CA^2$ as follows:
$$
\Pi^Fc=c^F.
$$
\end{definition}

\begin{proposition}\label{cap:prop:mapping}
The following properties about invariant subfield and the above mapping hold.
\begin{enumerate}[(i).]
\item $\forall c\in\mathcal{A}_2$, $\Pi^F(c)\leq c$;
\item $\forall A\in \CB$, if $A\subset F$ or $F\subset A$, one has $c^F(A)=c(A)$;
\item $\forall A\in \CB^c$, $c^F(A)=c(A)$;
\item  $F\in\CB^{c^F}$;
\item $\CB^{c}\subset \CB^{c^F}$;
\item If $F\in\CB^c$, $c^F=c$.
\end{enumerate}
\end{proposition}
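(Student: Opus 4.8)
The plan is to verify the six items essentially in the order stated, since each later item leans on the earlier ones. Item (i) is already contained in the proof of Theorem \ref{main}: expanding $c^F(A)=c(F\cup A)+c(F\cap A)-c(F)$ and using that $c$ is $2$-alternating gives $c^F(A)\le c(A)$ directly, so nothing new is needed. For item (ii), if $A\subset F$ then $F\cup A=F$ and $F\cap A=A$, so $c^F(A)=c(F)+c(A)-c(F)=c(A)$; the case $F\subset A$ is symmetric with $F\cup A=A$ and $F\cap A=F$. Item (iii) is immediate from the definition of the invariant subfield: $A\in\CB^c$ means exactly $c(F\cup A)+c(F\cap A)=c(F)+c(A)$ for every $B$, in particular with the role of $B$ played by $F$, hence $c^F(A)=c(A)$.

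For item (iv), the point is to show that $F$ satisfies the defining equation of $\CB^{c^F}$, i.e.\ $c^F(F\cup B)+c^F(F\cap B)=c^F(F)+c^F(B)$ for all $B\in\CB$. Here $F\cup B\supset F$ and $F\cap B\subset F$, so by item (ii) both $c^F(F\cup B)=c(F\cup B)$ and $c^F(F\cap B)=c(F\cap B)$; also $c^F(F)=c(F)$ again by (ii) (taking $A=F$), and $c^F(B)=c(F\cup B)+c(F\cap B)-c(F)$ by definition. Substituting, the left side is $c(F\cup B)+c(F\cap B)$ and the right side is $c(F)+\bigl(c(F\cup B)+c(F\cap B)-c(F)\bigr)=c(F\cup B)+c(F\cap B)$, so equality holds and $F\in\CB^{c^F}$.

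Item (v): let $A\in\CB^c$; we must show $A\in\CB^{c^F}$, i.e.\ $c^F(A\cup B)+c^F(A\cap B)=c^F(A)+c^F(B)$ for all $B$. The natural approach is to write every term via the definition of $c^F$ in terms of $c$, obtaining on each side a sum of $c$-values of sets of the form $F\cup(\,\cdot\,)$ and $F\cap(\,\cdot\,)$; then one rearranges using the set identities $F\cup(A\cup B)=(F\cup A)\cup(F\cup B)$, $F\cap(A\cap B)=(F\cap A)\cap(F\cap B)$, etc., exactly as in the computation in the proof of Theorem \ref{main}, and invokes $A\in\CB^c$ (so that the relevant unions/intersections involving $A$ split additively under $c$) to collapse both sides to the same expression. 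I expect this bookkeeping to be the main obstacle: it is not deep, but one must be careful that the modularity/additivity being used for $A$ is applied to the correct pairs of sets. Finally, item (vi) is a special case combining (ii)–(iii) with the hypothesis: if $F\in\CB^c$ then for every $A\in\CB$ the defining relation of $\CB^c$ gives $c(F\cup A)+c(F\cap A)=c(F)+c(A)$, i.e.\ $c^F(A)=c(A)$; since $A$ is arbitrary, $c^F=c$.
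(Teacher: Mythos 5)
Your items (i)--(iv) and (vi) are correct and coincide with the paper's own arguments ((i) and (vi) the paper simply declares obvious; your (ii)--(iv) are exactly its computations). The problem is item (v), which is the only substantive claim of the proposition and the one that drives Theorems \ref{exist} and \ref{cap:thm:sandwich}: there you give a plan rather than a proof. Saying that after expanding everything in terms of $c$ one ``rearranges using the set identities \dots exactly as in the computation in the proof of Theorem \ref{main}'' and that both sides will ``collapse'' does not establish the identity, and the identities you cite ($F\cup(A\cup B)=(F\cup A)\cup(F\cup B)$, etc.) are not the ones that do the work: the computation in Theorem \ref{main} distributes $F$ and then uses the $2$-alternating \emph{inequality}, which can only give a one-sided estimate, whereas (v) is an exact identity whose proof uses no $2$-alternation at all, only the modularity of $A$ applied against carefully chosen partner sets.

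Concretely, write $m_A(D):=c(A\cup D)+c(A\cap D)-c(A)-c(D)$, so $m_A\equiv 0$ because $A\in\CB^c$. Using (iii) for $c^F(A)$ and the definition of $c^F$ elsewhere, the quantity to be shown zero is
\begin{equation*}
c(F\cup(A\cup B))+c(F\cap(A\cup B))+c(F\cup(A\cap B))+c(F\cap(A\cap B))-c(A)-c(F\cup B)-c(F\cap B)-c(F).
\end{equation*}
The paper kills it by applying $m_A(D)=0$ with $D=F\cup B$ (note $F\cup(A\cup B)=A\cup(F\cup B)$), with $D=F\cap B$ (note $F\cap(A\cap B)=A\cap(F\cap B)$), with $D=F\cup(A\cap B)$ and $D=F\cap(A\cup B)$ via the absorption identities $(F\cup(A\cap B))\cup A=F\cup A$, $(F\cup(A\cap B))\cap A=A\cap(F\cup B)$, $(F\cap(A\cup B))\cap A=F\cap A$, $(F\cap(A\cup B))\cup A=A\cup(F\cap B)$, and finally with $D=F$; after these substitutions every term cancels. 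Until you carry out this (or an equivalent) verification, item (v) remains unproved in your write-up, even though the strategy you describe is the right one.
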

\begin{proof}
(i) and (vi) are obvious.

(ii).
Without lost of generality, suppose that $A\subset F$, thus
$c^F(A)=c(A\cup F)+c(A\cap F)-c(F)=c(F)+c(A)-c(F)=c(A)$.

(iii).
For all $A\in\CB^c$,
$c^F(A)=c(F\cup A)+c(F\cap A)-c(F)=c(A).$

(iv).
According to (ii), for all $A\in\mathcal{B}$,
\begin{eqnarray*}
&c^F(F\cup A)+c^F(F\cap A)-c^F(F)-c^F(A)\\
&=c(F\cup A)+c(A\cap F)-c(F)-(c(A\cup F)+c(A\cap F)-c(F))=0.
\end{eqnarray*}

(v).
Suppose $A\in \CB^c$, $B\in\CB$,
\begin{eqnarray*}
&&c^F(A\cup B)+c^F(A\cap B)-c^F(A)-c^F(B)\\
&=&c(F\cup (A\cup B))+c(F\cap (A\cup B))-c(F)+c(F\cup (A\cap B))\\
&&+c(F\cap (A\cap B))-c(F)-c(A)-c(F\cup B)-c(F\cap B)+c(F)\\
&=&[c(A)+c(F\cup B)-c(A\cap (F\cup B))]+c(F\cap (A\cup B))-c(F)\\
&&+c(F\cup (A\cap B))+[c(A)+c(F\cap B)-c(A\cup (F\cap B))]\\
&&-c(F)-c(A)-c(F\cup B)-c(F\cap B)+c(F)
\end{eqnarray*}
\begin{eqnarray*}
&=&c(A)-c(A\cap (F\cup B))+c(F\cap (A\cup B))-c(F)+c(F\cup (A\cap B))\\
&&-c(A\cup (F\cap B))\\
&=&[c(F\cup (A\cap B))-c(A\cap (F\cup B))]\\
&&+[c(F\cap (A\cup B))-c(A\cup (F\cap B))]+c(A)-c(F)\\
&=&[c(F\cup (A\cap B))-c((F\cup (A\cap B))\cap A)]\\
&&+[c(F\cap (A\cup B))-c((F\cap (A\cup B))\cup A)]+c(A)-c(F)\\
&=&[c((F\cup (A\cap B))\cup A)-c(A)]+[c((F\cap (A\cup B))\cap A)-c(A)]\\
&&+c(A)-c(F)\\
&=&[c(F\cup A)-c(A)]+[c(F\cap A)-c(A)]+c(A)-c(F)=0.
\end{eqnarray*}
\end{proof}

With the help of this mapping, we can prove Lemma \ref{denneberg1994lemma6}, i.e., the following theorem, by way of capacity.
\begin{theorem}\label{exist}
Consider $(\Omega,\CB)$. Suppose that $\CB$ is finite, $c$ is a 2-alternating capacity defined on $\CB$. Take $F_1,...,F_n\in \CB$ such that $F_1\subset F_2\subset...\subset F_n$. Thus there exists a probability measure $P$, such that $P(F_i)=c(F_i)$, for all $i=1,...,n$ and $P\leq c$.
\end{theorem}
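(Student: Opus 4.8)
The plan is to iterate the mapping $\Pi^{F}$ of Proposition~\ref{cap:prop:mapping} enough times that the invariant subfield of the resulting capacity becomes all of $\CB$ (so that the capacity is a probability measure), while ordering the iterations so that the values at $F_1,\dots,F_n$ are never disturbed and the capacity stays $\leq c$ throughout.

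First I would run the mapping along the chain. Set $c_0:=c$ and $c_k:=\Pi^{F_k}c_{k-1}=c_{k-1}^{F_k}$ for $k=1,\dots,n$. Since $\Pi^{F}$ sends $\CA_2$ into itself, every $c_k$ is 2-alternating, and Proposition~\ref{cap:prop:mapping}(i) gives $c_0\geq c_1\geq\cdots\geq c_n\geq 0$. By Proposition~\ref{cap:prop:mapping}(iv) and (v), $F_j\in\CB^{c_k}$ whenever $j\leq k$, so in particular $F_1,\dots,F_n\in\CB^{c_n}$. The values are preserved thanks to the chain structure: for a fixed $k$, each later step $\Pi^{F_j}$ with $j>k$ leaves the value at $F_k$ unchanged since $F_k\subset F_j$ (Proposition~\ref{cap:prop:mapping}(ii)), and $\Pi^{F_k}$ fixes the value at $F_k$ as well; hence $c_n(F_k)=\cdots=c_0(F_k)=c(F_k)$ for each $k$. (Equivalently, once $F_k\in\CB^{c_k}$, Proposition~\ref{cap:prop:mapping}(iii) says every subsequent move fixes its value.)

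Next I would enlarge the invariant subfield to everything. Enumerate the finite $\sigma$-algebra $\CB=\{G_1,\dots,G_m\}$, put $d_0:=c_n$ and $d_j:=\Pi^{G_j}d_{j-1}$ for $j=1,\dots,m$. Proposition~\ref{cap:prop:mapping}(i) gives $d_m\leq\cdots\leq d_0=c_n\leq c$; Proposition~\ref{cap:prop:mapping}(iv)--(v) gives $\CB^{d_j}\supseteq\CB^{d_{j-1}}\cup\{G_j\}$, so by induction $\CB^{d_m}\supseteq\{G_1,\dots,G_m\}=\CB$, and therefore $d_m$ is a probability measure, since a capacity lies in $\CP$ iff its invariant subfield is all of $\CB$. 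Moreover $F_1,\dots,F_n\in\CB^{c_n}=\CB^{d_0}\subseteq\CB^{d_j}$ for every $j$, so Proposition~\ref{cap:prop:mapping}(iii) shows each $\Pi^{G_j}$ fixes the values at the $F_i$, whence $d_m(F_i)=c_n(F_i)=c(F_i)$. Then $P:=d_m$ has all the required properties.

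The point that needs care is the ordering and bookkeeping in the first step: one must process the chain in increasing order and combine (ii) (a later move does not touch the argument $F_k$ because $F_k\subset F_j$) with (iv)/(v) (an earlier move has already placed $F_k$ in the invariant subfield, so that (iii) applies from then on). Beyond that the argument is a routine finite iteration, and the finiteness of $\CB$ is exactly what guarantees termination with $\CB^{d_m}=\CB$; no new quantitative estimate on $c$ is needed.
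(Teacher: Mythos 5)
Your proposal is correct and follows essentially the same route as the paper: iterate the mapping $\Pi^{F}$ of Proposition~\ref{cap:prop:mapping}, first over the chain $F_1\subset\cdots\subset F_n$ (using (i), (ii), (iv), (v) to keep the values $c(F_i)$ and put the $F_i$ into the invariant subfield) and then over the remaining sets of the finite algebra (using (iii) to preserve those values) until the invariant subfield is all of $\CB$, yielding the desired probability measure $P\leq c$. Your explicit enumeration $d_j=\Pi^{G_j}d_{j-1}$ is just a more deterministic bookkeeping of the paper's ``cyclic program,'' not a different argument.
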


\begin{proof}
First, we design a cyclic program as follows.

Set $\mu=c$.

{\bf Step I}: Check $F_i$, $i=1,...,n$. If all the sets $F_i$ belong to the invariant subfield of $\mu$, go straight to Step III. Otherwise, suppose that $F_i$ does not belong to the subfield of $\mu$. By Proposition \ref{cap:prop:mapping}, the following result holds:
$$\mu^{F_i}(F_j)=\mu(F_j),\forall j=1,...,n,$$
i.e., $\mu^{F_i}$ and $\mu$ are equal on $F_j$, $j=1,...,n,$;
$$F_i\in\CB^{\mu^{F_i}},\quad
\CB^\mu\subset\CB^{\mu^{F_i}},$$
i.e., from $\mu$ to $\mu^{F_i}$, the invariant subfield is enlarged and $F_i$ is also included.

{\bf Step II}: Update $\mu$ by $c^{F_i}$. The invariant subfield of $\mu$ is enlarged by Step I. Repeating the procedures in Step I.

{\bf Step III}: We get the final $\mu$, which satisfies $\mu\in\CA_2$, $\mu\leq c$, and for all $i=1,...,n$, $\mu(F_i)\equiv c(F_i)$, $F_i\in\CB^\mu$, $\CB\subset\CB^\mu$.

Next, we consider $\mu$, and design another cyclic program.

{\bf Step 1}: Check $\CB^{\mu}$ and $\CB$. If they are the same, go straight to Step 3. Otherwise, suppose $A\in\CB/\CB^{c_F}$. Consider the transformation of $\mu$ induced by $A$. By Proposition \ref{cap:prop:mapping}, we have
$\mu^A(F_i)\equiv\mu(F_i)$, $i=1,...,n$.

{\bf Step 2}: Update $\mu$ by $\mu^A$. The invariant subfield of $\mu$ is enlarged. Repeat the procedures in Step 1.

{\bf Step 3}: $\mu$ satisfies the following conditions: for all $i=1,...,n$, $\mu(F_i)=c(F_i)$, $\mu\leq c$. Furthermore, $\CB^{\mu}=\CB$, thus $\mu$ is just the probability measure satisfying the conditions needed.
The proof is complete.
\end{proof}

\begin{theorem}\label{cap:thm:sandwich}
Consider space $(\Omega,\CB)$. Suppose that $\CB$ is finite, $\mu$ is a 2-alternating capacity defined on $\CB$, $\nu$ is a 2-monotone capacity defined on $\CB$. If $\mu\geq \nu$, there exists a probability measure $P$ such that $\mu\geq P\geq \nu$.
\end{theorem}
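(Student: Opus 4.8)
The plan is to produce $P$ as a $\leq$-minimal element of
\[
S\triangleq\{c\in\CA_2: \nu\leq c\leq\mu\},
\]
and then to show that \emph{any} $\leq$-minimal element of $S$ is automatically a probability measure. The set $S$ is nonempty, since $\mu\in S$, and — because $\CB$ is finite — it is a compact subset of $[0,1]^{\CB}$ cut out by finitely many weak linear constraints (monotonicity, the $2$-alternating inequalities, $\nu\leq c\leq\mu$, and $c(\Omega)=1,\ c(\emptyset)=0$). Minimizing the linear functional $c\mapsto\sum_{A\in\CB}c(A)$ over $S$ yields a point $c^{*}\in S$ which is $\leq$-minimal in $S$: if $c'\in S$ with $c'\leq c^{*}$ and $c'\neq c^{*}$, then $c'(A_0)<c^{*}(A_0)$ for some $A_0$ while $c'\leq c^{*}$ everywhere, so $\sum_{A}c'(A)<\sum_{A}c^{*}(A)$, a contradiction. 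It then remains to prove $c^{*}\in\CP$, which is the heart of the matter; it runs parallel to the converse half of Theorem \ref{main}, but is carried out inside the constrained set $S$, and uses only Proposition \ref{cap:prop:mapping} together with compactness (not Theorem \ref{exist} itself).

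Suppose, for contradiction, that $c^{*}\notin\CP$. Then $\CB^{c^{*}}\neq\CB$, so fix $F\in\CB\setminus\CB^{c^{*}}$; since $c^{*}$ is $2$-alternating there is a witness $B'$ with $c^{*}(F\cup B')+c^{*}(F\cap B')<c^{*}(F)+c^{*}(B')$, i.e.\ $(c^{*})^{F}(B')<c^{*}(B')$. Recall $(c^{*})^{F}=\Pi^{F}c^{*}\in\CA_2$, and $(c^{*})^{F}\leq c^{*}\leq\mu$ by Proposition \ref{cap:prop:mapping}(i); thus the natural candidate $(c^{*})^{F}$ lies below $c^{*}$, is strictly below at $B'$, and respects the upper bound $\mu$ for free, so the only obstruction to its contradicting the minimality of $c^{*}$ is the lower bound $(c^{*})^{F}\geq\nu$. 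Here the argument splits into two exhaustive cases.

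\emph{Case 1: some $F\in\CB\setminus\CB^{c^{*}}$ has $c^{*}(F)=\nu(F)$.} Then for every $B\in\CB$, using $c^{*}\geq\nu$ on $F\cup B$ and $F\cap B$, the equality $c^{*}(F)=\nu(F)$, and the $2$-monotonicity of $\nu$,
\[
(c^{*})^{F}(B)=c^{*}(F\cup B)+c^{*}(F\cap B)-c^{*}(F)\geq\nu(F\cup B)+\nu(F\cap B)-\nu(F)\geq\nu(B),
\]
so $(c^{*})^{F}\in S$, $(c^{*})^{F}\leq c^{*}$, $(c^{*})^{F}\neq c^{*}$, contradicting minimality. \emph{Case 2: every $A\in\CB\setminus\CB^{c^{*}}$ satisfies $c^{*}(A)>\nu(A)$} (the negation of Case 1). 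Fix any $F\in\CB\setminus\CB^{c^{*}}$ with witness $B'$ and, for $t\in[0,1]$, set $c_t\triangleq(1-t)c^{*}+t\,(c^{*})^{F}$. Since $\CA_2$ is convex (its defining inequalities being affine in $c$) and $(c^{*})^{F}\leq c^{*}\leq\mu$, we get $c_t\in\CA_2$ and $c_t\leq\mu$ for all $t$. By Proposition \ref{cap:prop:mapping}(iii), $(c^{*})^{F}$ agrees with $c^{*}$ on $\CB^{c^{*}}$, hence so does $c_t$; and for the finitely many $A\in\CB\setminus\CB^{c^{*}}$ one has $c_t(A)\geq c^{*}(A)-t$, which exceeds $\nu(A)$ once $t<\delta\triangleq\min\{c^{*}(A)-\nu(A):A\in\CB\setminus\CB^{c^{*}}\}>0$. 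Thus $c_t\in S$ for all small $t>0$, while $c_t(B')<c^{*}(B')$ for every $t>0$, again contradicting minimality.

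In either case we reach a contradiction, so $c^{*}\in\CP$ and $P\triangleq c^{*}$ is a probability measure with $\nu\leq P\leq\mu$. The step I expect to be the main obstacle is precisely this case distinction: the pinning operator $\Pi^{F}$ lowers a $2$-alternating capacity while preserving domination by $\mu$ at no cost, but it does not in general preserve domination over $\nu$ — it does so exactly when $c^{*}$ is already tight at $F$ (Case 1), and otherwise one must move only infinitesimally along $(c^{*})^{F}-c^{*}$, a direction that vanishes on $\CB^{c^{*}}\supseteq\{A:c^{*}(A)=\nu(A)\}$ and so keeps $c_t$ inside $S$ (Case 2). (An alternative route is induction on $|\Omega|$: pin $P(\{\omega\})$ to a suitable value in an interval and pass to induced $2$-alternating and $2$-monotone capacities on $\Omega\setminus\{\omega\}$; this works too, but checking that the induced set functions keep the required sub/supermodularity and still satisfy the domination is more laborious.)
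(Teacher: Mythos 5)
Your proof is correct, but it takes a genuinely different route from the paper. The paper argues constructively, in the spirit of its Theorem \ref{exist}: it repeatedly applies the pinning map $\Pi^{A}$ to $\mu$, choosing at each stage $A\in\CB\setminus\CB^{\mu}$ with \emph{minimal} gap $\mu(A)-\nu(A)$, checks that this particular choice keeps the transformed capacity above $\nu$ (this is exactly the delicate point, and is why the minimal-gap selection is made), and stops after finitely many steps because the invariant subfield $\CB^{\mu}$ strictly grows each time; the output is an explicit $P$ with $\CB^{P}=\CB$, and the paper even remarks that different orders of pinning give different measures. You instead give a variational existence argument: minimize a linear functional over the compact convex set $S=\{c\in\CA_2:\nu\leq c\leq\mu\}$ to get a $\leq$-minimal element $c^{*}$, then show minimality forces $c^{*}\in\CP$, relativizing the converse half of Theorem \ref{main} to the order interval $[\nu,\mu]$. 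The key obstacle is the same in both proofs --- $\Pi^{F}$ preserves the upper bound $\mu$ for free but not the lower bound $\nu$ --- and your Case 1/Case 2 dichotomy (use $(c^{*})^{F}$ outright when $c^{*}(F)=\nu(F)$, otherwise perturb convexly by a small $t<\min\{c^{*}(A)-\nu(A):A\in\CB\setminus\CB^{c^{*}}\}$, which is legitimate since $\CA_2$ is cut out by affine inequalities and $(c^{*})^{F}$ agrees with $c^{*}$ on $\CB^{c^{*}}$ by Proposition \ref{cap:prop:mapping}(iii)) handles it completely and, if anything, more explicitly than the paper's rather terse justification that $\mu^{A}\geq\nu$ for the minimal-gap $A$. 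What you lose relative to the paper is constructiveness (your $P$ comes from a compactness/optimization argument rather than a finite pinning algorithm); what you gain is a clean self-contained argument that only needs Proposition \ref{cap:prop:mapping}, convexity of $\CA_2$, and finite-dimensional compactness, and that fits naturally into the paper's ``minimal member'' theme.
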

\begin{proof}
Since $\CB$ is finite, we can take $A\in\CB/\CB^{\mu}$ such that
 $$\mu(A)-\nu(A)=\min\limits_{B\in\CB/\CB^{\mu}}\left\{\mu(B)-\nu(B)\right\}.$$
Make transformation $\Pi^A$ on $\mu$, thus
$$\mu^A(B)=\mu(A\cup B)+\mu(A\cap B)-\mu(A)\geq \nu(A\cup B)+\nu(A\cap B)-\nu(A)\geq \nu(B),$$
i.e., $\mu\geq\mu^A\geq \nu$. By Proposition \ref{cap:prop:mapping},
$$
\CB^{\mu}\subset\CB^{\mu^A},
$$
and $\CB^{\mu}\neq\CB^{\mu^A}$.

For $\mu^A$, repeat the above procedure, until we get a capacity $P$, such that $\CB^{P}=\CB$. $P$ satisfying that $\mu\geq P\geq \nu$. The proof is complete.
\end{proof}

\begin{remark}
According the above theorem, we may get different probability measures if we make transformation by different sets or in a different order.
\end{remark}

\end{CJK}

\begin{thebibliography}{999}
\bibitem{choquet1953}Choquet, G., Theory of capacities, Annales de l'Institut Fourier, 5 (1953-1954), 131-295.
\bibitem{denneberg1994} Dieter Denneberg, Non-additive measure and integral, Kluwer Academic Publishers, Boston,1994, 184 pp.
\bibitem{jia2009minimal} Guangyan Jia, The minimal sublinear expectations and their related properties, Sicence in China Ser. A: Mathematics. 39(2009), 79-87.
\end{thebibliography}
\end{document}